\newcommand*\patchAmsMathEnvironmentForLineno[1]{%
  \expandafter\let\csname old#1\expandafter\endcsname\csname #1\endcsname
  \expandafter\let\csname oldend#1\expandafter\endcsname\csname end#1\endcsname
  \renewenvironment{#1}%
     {\linenomath\csname old#1\endcsname}%
     {\csname oldend#1\endcsname\endlinenomath}}%
\newcommand*\patchBothAmsMathEnvironmentsForLineno[1]{%
  \patchAmsMathEnvironmentForLineno{#1}%
  \patchAmsMathEnvironmentForLineno{#1*}}%
\newtheorem{thm}{Theorem}[section]
\newtheorem{clm}[thm]{Claim}
\title{A note on rainbow matchings in properly edge-coloured graphs
\footnote{2010 Mathematics Subject Classification: 05C15, 05C70, Keywords: rainbow matchings, properly colored graphs}}
\author{Allan Lo}
\affil{School of Mathematics, University of Birmingham, \\Birmingham, B15 2TT, UK\\
\texttt{s.a.lo@bham.ac.uk}}
\begin{document}

\maketitle

\abstract{
A rainbow matching in an edge-coloured graph is a matching such that its edges have distinct colours. 
We show that every properly edge-coloured graph $G$ with $|G| \ge  (9\delta(G) -5)/2$ has a rainbow matching of size~$\delta(G)$, improving a result of Diemunsch~et al.
}

\section{Introduction}
Let $G = (V,E)$ be a simple undirected graph without loops. 
Write $|G|$, $\delta(G)$ and $\Delta(G)$ for the order, minimum degree and maximum degree of $G$ respectively.
A \emph{proper edge-colouring} of $G$ is a function $c:E \rightarrow \{1,2, \dots\}$ such that any two adjacent edges have distinct colours. 
If $G$ is assigned such a colouring $c$, then we say that $G$ is a \emph{properly edge-coloured graph}.
Denote the colour of the edge $e \in E$ by~$c(e)$.
A subgraph $H$ of $G$ is \emph{rainbow} if its edges have distinct colours. 
The study of rainbow matchings began with a conjecture of Ryser~\cite{ryser1967neuere}, which states that every Latin square of odd order contains a Latin transversal.
An equivalent statement is that for $n$ odd, every $n$-edge-coloured of complete bipartite graph $K_{n,n}$ contains a rainbow perfect matching. 
A survey on rainbow matchings and other rainbow subgraphs in edge-coloured graphs appears in~\cite{MR2438857}.

LeSaulnier et al.~\cite{MR2651735} proved that if $G$ is a properly edge-coloured graph with $G \ne K_4$ or $|G| \ne \delta(G) +2$, then $G$ contains a rainbow matching of size $\lceil \delta(G) /2 \rceil$.
If we further impose that $|G| \ge 8 \delta(G) /5$, then Wang~\cite{Wang2011} showed that $G$ contains a rainbow matching of size $\lfloor 3\delta(G) /5 \rfloor$.
In the same paper, Wang asked whether there exists a function $f(n)$ such that every properly edge-coloured graph~$G$ with $|G| \ge f(\delta(G))$ contains a rainbow matching of size~$\delta(G)$.
Clearly, if $f(n)$ exists, then $f(n) \ge 2n$.
In fact,  $f(n) > 2n$ for $n$ even as there exist $n \times n$ Latin square that have no Latin transversal (see \cite{MR1130611} and \cite{wanless2009transversals}).
Diemunsch et al.~\cite{diemunsch2011rainbow} gave an affirmative answer to Wang's question and showed that $f(n) =  \lfloor 13 n/2 - 23/2+41/(8n) \rfloor +1$ suffices.
In this article, we show that $f(n) = (9 n-5)/2$ would also be sufficient, improving the values of $f(n)$ for $n \ge 5$.

\begin{thm} \label{thm:main}
Every properly edge-coloured graph $G$ with $|G| \ge  (9\delta(G) -5)/2$ has a rainbow matching of size~$\delta(G)$.
\end{thm}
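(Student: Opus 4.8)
The plan is to argue by contradiction. Write $\delta=\delta(G)$ and suppose $G$ has no rainbow matching of size $\delta$. Choose a maximum rainbow matching $M=\{e_1,\dots,e_k\}$ with $e_i=x_iy_i$; by assumption $k=|M|\le\delta-1$. Let $U=V(G)\setminus V(M)$ be the set of \emph{uncovered} vertices and write $c(M)=\{c(e_1),\dots,c(e_k)\}$ for the colours used by $M$. A first, purely arithmetic observation is that
\begin{equation}
|U|=|G|-2k\ge\frac{9\delta-5}{2}-2(\delta-1)=\frac{5\delta-1}{2},
\end{equation}
so there are many uncovered vertices. I record three local facts. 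First, by maximality of $M$, every edge with both endpoints in $U$ has a colour lying in $c(M)$, since otherwise it could be added to $M$. Second, since each $u\in U$ has $d(u)\ge\delta$ and, by properness, at most one incident edge of each of the $k$ colours in $c(M)$, the vertex $u$ is incident with at least $\delta-k\ge1$ edges whose colour lies outside $c(M)$; by the first fact all such \emph{new} edges run from $U$ into $V(M)$. Third, again by properness, the new edges from $U$ landing at a fixed vertex of $V(M)$ carry pairwise distinct colours.

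The heart of the argument is an augmenting move. Call $(u,v,i)$ an \emph{augmenting triple} if $u,v\in U$ are distinct, $ux_i$ and $vy_i$ are edges of colours outside $c(M)$, and $c(ux_i)\ne c(vy_i)$; for such a triple $(M\setminus\{e_i\})\cup\{ux_i,vy_i\}$ is a rainbow matching of size $k+1$, contradicting maximality. Thus no augmenting triple exists. For each $i$ let $P_i$ (resp.\ $Q_i$) be the set of uncovered vertices joined to $x_i$ (resp.\ $y_i$) by a new edge. The absence of augmenting triples forces, for every $i$, a severe restriction on the pair $(P_i,Q_i)$: whenever both $P_i$ and $Q_i$ are nonempty and not reduced to a single common vertex, the distinctness of colours at $x_i$ and at $y_i$ lets one pick a valid triple. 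Hence the only way to avoid augmentation is the \emph{asymmetric} situation in which, for each $i$, essentially all new edges at $e_i$ are concentrated at one endpoint while the other endpoint receives none.

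To defeat this asymmetric case I would use \emph{rotations}. If $ux_i$ is a new edge then $M'=(M\setminus\{e_i\})\cup\{ux_i\}$ is again a maximum rainbow matching, now leaving $y_i$ uncovered; one then searches for an augmenting triple, or a further new edge at $y_i$, relative to $M'$. Carried out systematically, this means passing to the class of all maximum rainbow matchings reachable from $M$ by such swaps and tracking the vertices that become uncovered somewhere in this class, which effectively enlarges the usable uncovered set. The count is then closed by double-counting new edges: summing $|P_i|+|Q_i|$ over $i$ gives the number of new edges from $U$ to $V(M)$, which is at least $(\delta-k)|U|\ge(5\delta-1)/2$ by the three facts above, while the structural restrictions — once the rotation class is taken into account so that the asymmetric concentration no longer helps — force this quantity to be of order at most $k\le\delta-1$. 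The two estimates are incompatible, which is the desired contradiction.

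The main obstacle is precisely the asymmetric and degenerate configurations of the second paragraph, where the naive one-step swap fails even though new edges are plentiful. Controlling them requires the rotation bookkeeping over the whole reachable class of maximum rainbow matchings, together with a careful treatment of the boundary cases $P_i\cap Q_i\ne\emptyset$ and coincident colours. It is the efficiency of this bookkeeping — how many genuinely distinct new edges and liberated vertices one can guarantee — that pins down the constant and yields the threshold $|G|\ge(9\delta-5)/2$ rather than a weaker bound.
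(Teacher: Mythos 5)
Your opening moves do echo the paper's: the observation that every edge inside the uncovered set $U$ must reuse a colour of $M$, and the ``augmenting triple'' obstruction, which is essentially the paper's Claim 2.3. (Even there your dichotomy is too strong: if $P_i=\{u\}$ and $Q_i=\{v\}$ with $u\ne v$ but $c(ux_i)=c(vy_i)$, no augmentation exists, so ``nonempty and not a single common vertex'' does not suffice; one needs at least \emph{three} new edges at $x_i$ to forbid all new edges at $y_i$.) The genuine gap is in the closing count. You assert that, after rotations, the number of new edges from $U$ to $V(M)$ is ``of order at most $k\le\delta-1$'', but no argument is given and the claim is not true as stated: the no-augmenting-triple condition only forces the new edges at a pair $\{x_i,y_i\}$ to concentrate on one endpoint, and that endpoint can absorb up to $\Delta(G)$ new edges, which in your setup is not bounded in terms of $\delta$ at all. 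Even granting the paper's bound $\Delta(G)\le 3(\delta-1)$ (which itself requires an induction on $\delta$ via a minimal counterexample that your argument never sets up), the concentrated configuration carries up to roughly $3(\delta-1)k$ new edges, far more than your lower bound $(\delta-k)|U|$, so the two estimates are perfectly compatible and yield no contradiction. The rotation/reachable-class device is invoked precisely to repair this, but nothing concrete is proved about it, and it is not the mechanism the paper uses.

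What actually closes the count in the paper is a pair of ingredients absent from your sketch. First, minimality of $\delta$ over counterexamples gives both $\Delta(G)\le 3(\delta-1)$ and a rainbow matching of size $\delta-1$ avoiding a \emph{maximum monochromatic matching} $M_0$ of size $a$. Second, the parameter $a$ is what turns the degree condition into a usable lower bound: every colour class is a matching of size at most $a$, so each colour of $M$ meets at most about $2(a-1)$ vertices of the uncovered set; summing over colours shows each uncovered vertex sends out many ``nice'' edges, and this lower bound is played against an upper bound coming from the concentration structure (a two-level hierarchy of good and nice vertices, Claims 2.3--2.7), with a final optimisation over $a$, $r$, $s$, $t$ that produces the threshold $(9\delta-5)/2$. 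Without the monochromatic-matching parameter, the degree-$\Delta$ control, and an actual quantitative version of your rotation step, the double count as you have set it up cannot be made to close.
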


\section{Proof of Theorem~\ref{thm:main}}
Let $G$ be a properly edge-coloured graph with minimum degree $\delta$ and $n = |G| \ge (9\delta - 5)/2$ vertices.
The theorem trivially holds for $\delta = 1$, so we may assume that $\delta \ge 2$.
Suppose the theorem is false.
Let $G$ be a counterexample with $\delta$ minimal, so $n \ge (9\delta-5)/2$ and $G$ does not contain a rainbow matching of size~$\delta$.
We break down the proof into a series of simple claims.

\begin{clm} \label{clm:Delta}
$\Delta(G) \le 3(\delta -1)$. 
\end{clm}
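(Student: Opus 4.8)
The plan is to derive a contradiction from having a vertex of very high degree. Suppose for contradiction that some vertex $v$ has $\deg(v) = \Delta(G) \ge 3\delta - 2$. The key idea is to first find a large rainbow matching somewhere in $G$ (of size close to $\delta$, guaranteed to exist because $G$ is a minimal counterexample), and then show that the high degree of $v$ lets us extend it, contradicting maximality.

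More concretely, since $\delta$ is minimal, I would like to leverage the fact that $G$ does contain rainbow matchings of size $\delta - 1$. Indeed, consider a maximum rainbow matching $M = \{e_1, \dots, e_k\}$ in $G$; by assumption $k \le \delta - 1$. Write $V(M)$ for the $2k$ vertices covered by $M$ and let $U = V \setminus V(M)$. Maximality of $M$ forces that $G$ contains no edge of $U$ whose colour is outside $c(M)$ (otherwise we could enlarge $M$), and more generally any edge disjoint from $V(M)$ must repeat a colour of $M$. The plan is to count edges incident to the high-degree vertex $v$ and argue that the colours and endpoints available to $v$ are so plentiful that one of $v$'s edges must be usable to augment $M$.

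The cleanest route I would try first: take the high-degree vertex $v$ and look at its $\Delta \ge 3\delta - 2$ neighbours, together with their incident edge-colours. Since the colouring is proper, the edges at $v$ already use $\Delta$ distinct colours. I would then attempt an augmentation argument along a short alternating path: if $v \in U$, then for $M$ to be unextendable every colour $c(vu)$ with $u \in U$ must already appear in $M$, which with $\Delta \ge 3\delta - 2$ and $|M| \le \delta - 1$ already strains the available colours; if $v \in V(M)$, I would swap the edge of $M$ at $v$ for a better-coloured edge at $v$ and re-examine. The goal is to exhibit a new rainbow matching of size $k + 1$, contradicting maximality of $M$ (and hence the assumption that $G$ is a counterexample, via the fact that a counterexample has no rainbow matching of size $\delta$).

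The main obstacle I anticipate is the bookkeeping of colours: showing that among $v$'s many incident edges, one can be chosen with a colour genuinely new to the matching \emph{and} an endpoint that can be accommodated without destroying the rainbow property. The tension is between the $\le \delta - 1$ colours and $\le 2(\delta-1)$ vertices that $M$ blocks versus the $\ge 3\delta - 2$ edges at $v$; a careful pigeonhole using the order bound $n \ge (9\delta-5)/2$ (which guarantees $U$ is large enough, $|U| = n - 2k \ge (9\delta-5)/2 - 2(\delta-1)$) should close the gap, but getting the constant $3(\delta-1)$ exactly right, rather than a weaker bound like $4\delta$, is where the delicate counting will lie. I would expect the final step to balance the size of $U$ against the number of forbidden colours at $v$ to pin down precisely $\Delta(G) \le 3(\delta-1)$.
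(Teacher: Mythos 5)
Your central counting idea is exactly the right one and matches the paper's: a properly coloured vertex $v$ of degree greater than $3(\delta-1)$ has at most $2(\delta-1)$ edges into $V(M)$ and at most $\delta-1$ further edges whose colour already appears in the rainbow matching $M$ of size $\delta-1$ (properness gives at most one edge per colour at $v$), so some edge $vu$ has $u\notin V(M)$ and a new colour, and $M\cup\{vu\}$ wins. Note that this pigeonhole needs nothing from the order bound $n\ge(9\delta-5)/2$; your suggestion that $|U|$ must be ``large enough'' is a red herring.

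However, there is a genuine gap: your argument only closes when $v\notin V(M)$, and your proposed remedy for the case $v\in V(M)$ --- ``swap the edge of $M$ at $v$ for a better-coloured edge at $v$ and re-examine'' --- does not make progress. Deleting the edge of $M$ at $v$ and re-adding a different edge at $v$ returns a rainbow matching of the same size $\delta-1$ still covering $v$, so you are back where you started; you never reach size $\delta$. The paper sidesteps this entirely by choosing where the matching lives: apply the minimal-counterexample hypothesis to $G\setminus\{v\}$, which has minimum degree at least $\delta-1$ and order $n-1\ge(9(\delta-1)-5)/2$, to obtain a rainbow matching $M$ of size $\delta-1$ that \emph{avoids} $v$ by construction. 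Then only your (correct) case $v\notin V(M)$ arises and the proof is three lines. Without that choice, or some other argument handling $v\in V(M)$, your proof is incomplete.
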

\begin{proof}
Let $v$ be a vertices in $G$ such that $d(v) > 3(\delta-1)$.
By the minimal counterexample, there is a rainbow matching $M$ of size $\delta -1$ in $G \backslash \{v\}$.
Recall that $G$ is properly edge-coloured and $d(v) > 3(\delta-1)$, so there exists a vertex $u \in V(G) \backslash (V(M) \cup \{v\})$ such that the colour $c(uv)$ does not appear in $M$.
Thus, $M \cup \{uv\}$ is a rainbow matching of size $\delta$, a contradiction.
\end{proof}

Let $a$ be the size of the largest monochromatic matching $M_0$ in~$G$.
First we show that $a \ge 2$.
\begin{clm} \label{clm:a}
$a \ge 2$.
\end{clm}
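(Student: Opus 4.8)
The plan is to prove the contrapositive: if $a = 1$ then $G$ has a rainbow matching of size~$\delta$, contradicting the choice of~$G$. The crucial point is that $c$ is \emph{proper}, so any two edges of the same colour are automatically vertex-disjoint (adjacent edges receive distinct colours). Hence every colour class is itself a matching, and a monochromatic matching of size~$2$ is nothing more than a pair of equally coloured edges. Therefore $a = 1$ forces every colour to be used exactly once, i.e.\ $G$ is rainbow; in that case every matching of $G$ is automatically a rainbow matching.

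It then suffices to show $\nu(G) \ge \delta$, where $\nu(G)$ denotes the matching number, since any matching of size~$\delta$ would be a rainbow matching of size~$\delta$. I would first record that the order hypothesis leaves ample room: $n \ge (9\delta - 5)/2 \ge 2\delta$ for all $\delta \ge 1$, and notably Claim~\ref{clm:Delta} is not needed here. So the claim reduces to the elementary fact that a graph with minimum degree~$\delta$ and at least $2\delta$ vertices contains a matching of size~$\delta$.

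To prove that fact I would invoke the Berge--Tutte formula. If $\nu(G) \le \delta - 1$, there is a set $S$ of size $s$ with $o(G-S) - s \ge n - 2(\delta-1)$, where $o$ counts odd components. Minimum degree forces each component $C$ of $G - S$ to satisfy $|C| \ge \delta - s + 1$, since a vertex of $C$ sees only $(C \setminus \{v\}) \cup S$. Comparing $\sum_C |C| = n - s$ against at least $o(G-S)$ components each of this size yields $(n - 2\delta + 2 + s)(\delta - s + 1) \le n - s$; this already forces $s \le \delta - 1$, and after the substitution $s = \delta - t$ it collapses to $n - \delta \le t - 2/t \le \delta$, i.e.\ $n < 2\delta$, contradicting $n \ge 2\delta$.

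The only genuine work is this last piece of bookkeeping: confirming the component-size estimate, checking that the deficiency inequality already rules out $s \ge \delta$, and verifying that the resulting quadratic inequality simplifies to $n < 2\delta$. (Alternatively one may simply cite the standard bound $\nu(G) \ge \min\{\delta, \lfloor n/2\rfloor\}$.) Once $\nu(G) \ge \delta$ is established, the corresponding matching is rainbow and of size~$\delta$, contradicting that $G$ is a counterexample; therefore $a \ge 2$.
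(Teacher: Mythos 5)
Your proof is correct, and its first half --- observing that in a proper colouring two edges of the same colour are automatically vertex-disjoint, so $a=1$ forces every colour class to be a single edge, $G$ to be rainbow, and the claim to reduce to $\nu(G)\ge\delta$ --- is exactly the paper's reduction. Where you genuinely diverge is in establishing $\nu(G)\ge\delta$. The paper does this by hand: it takes a maximum matching $M=\{x_iy_i: 1\le i\le m\}$ with $m\le\delta-1$, notes that every vertex $v\notin V(M)$ has all neighbours in $V(M)$ and hence, by pigeonhole on the $m$ pairs (since $d(v)\ge\delta>m$), is adjacent to both endpoints of some $x_iy_i$; because there are $n-2m>m$ vertices outside $V(M)$, two distinct such vertices $v,v'$ share an index $i$, and $M\cup\{x_iv,y_iv'\}\setminus\{x_iy_i\}$ contradicts maximality. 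You instead run the Berge--Tutte deficiency formula to prove the folklore bound $\nu(G)\ge\min\{\delta(G),\lfloor n/2\rfloor\}$, and your bookkeeping is sound: each component of $G-S$ has at least $\delta-s+1$ vertices, the component count alone rules out $s\ge\delta$, and the quadratic inequality collapses to $n<2\delta$, contradicting $n\ge 2\delta$. The trade-off is clear: your route needs only $n\ge 2\delta$, which is best possible and can be outsourced to a standard citation, at the cost of invoking Tutte--Berge; the paper's swap argument is entirely elementary and self-contained but leans on the extra room $n\ge 3\delta-2$, which the hypothesis $n\ge(9\delta-5)/2$ supplies for free. Either argument validly proves the claim.
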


\begin{proof}
If $a = 1$, then $G$ is rainbow.
Thus, it is enough to show that $G$ contains a matching of size~$\delta$.
Let $M= \{ x_i y_i : 1 \le i \le m \}$ be a matching of maximal size.
If $m \ge \delta$, then we are done.
For each $v \notin V(M)$, the neighbourhood of $v$ must lie in~$V(M)$.
Since $\delta(G) \ge \delta$, there exists an integer $1 \le i \le m$ such that both $x_iv$ and $y_iv$ are edges.
As $|G| \ge 3 \delta$, there exist an integer $1 \le i \le t$ and vertices $v,v' \notin V(M)$ such that $v \ne v'$ and both $x_iv$ and $y_iv'$ are edges.
Thus, $M \cup \{x_iv, y_iv'\} \backslash \{x_iy_i\}$ is a matching of size $m+1$ contradicting the maximality of~$M$.
\end{proof}

Fix a monochromatic matching $M_0$ of size~$a$.
By the minimal counterexample, there exists a rainbow matching $M= \{ x_i y_i : 1 \le i \le \delta -1 \}$ of size~$\delta-1$ in $G -M_0$.
Without loss of generality, we may assume that $c(x_iy_i) = i $ for $1 \le i \le \delta - 1$ and $M_0$ is of colour~$\delta$.
Note that every edge in $M_0$ intersects with $V(M)$ or else we can enlarge $M$ to a rainbow matching of size~$\delta$.
Set $W = V(G) \backslash V(M)$, so $|W| = n - 2(\delta -1)$.
We say that an edge $uv$ is \emph{good} if its colour is not in~$\{1, \dots, \delta-1\}$ and one of its vertices is in~$W$.
If there exists a good edge $uv$ in $G[W]$ (the induced edge-coloured subgraph of $G$ on $W$), then $M \cup \{uv\}$ is a rainbow matching of size~$\delta$.
Thus, we may assume that every good edge is incident with~$V(M)$, so every good edge lies between $V(M)$ and~$W$.

\begin{clm} \label{clm:goodvertex}
For $1 \le i \le \delta-1$, if $x_i$ is incident with at least three good edges, then no good edge is incident with~$y_i$, and vice versa.
\end{clm}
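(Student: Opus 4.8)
The plan is to argue by contradiction, exploiting the fact established just before the claim that every good edge runs between $V(M)$ and $W$. Suppose that for some fixed $i$ the vertex $x_i$ is incident with at least three good edges while $y_i$ is incident with at least one good edge. Since good edges join $V(M)$ to $W$, the three good edges at $x_i$ have the form $x_i w_1, x_i w_2, x_i w_3$ with distinct $w_1, w_2, w_3 \in W$, and the good edge at $y_i$ has the form $y_i w'$ with $w' \in W$. By the definition of a good edge, none of the colours $c(x_i w_1), c(x_i w_2), c(x_i w_3), c(y_i w')$ lies in $\{1, \dots, \delta-1\}$.

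The strategy is then to replace the edge $x_i y_i$ in $M$ by a suitable pair of good edges $x_i w_j$ and $y_i w'$, thereby producing a rainbow matching of size $\delta$. For $M \setminus \{x_i y_i\} \cup \{x_i w_j, y_i w'\}$ to be such a matching, I need two things: the two new edges must be vertex-disjoint, which (as $w_j, w' \in W$ are disjoint from $V(M)$) reduces to $w_j \neq w'$; and their colours must differ, i.e. $c(x_i w_j) \neq c(y_i w')$. No other colour clash can occur, since both new colours lie outside $\{1, \dots, \delta-1\}$, which are precisely the colours used by the remaining $\delta-2$ edges of $M$. Hence any admissible $j$ yields the desired size-$\delta$ rainbow matching, contradicting the choice of $G$.

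It remains to show that some index $j \in \{1,2,3\}$ satisfies both requirements, and this is exactly where three good edges at $x_i$ (rather than two) are needed. At most one of $w_1, w_2, w_3$ can coincide with $w'$. Moreover, since $x_i w_1, x_i w_2, x_i w_3$ all meet at $x_i$ and $G$ is properly edge-coloured, the colours $c(x_i w_1), c(x_i w_2), c(x_i w_3)$ are pairwise distinct, so at most one of them can equal $c(y_i w')$. Thus at most two of the three indices are excluded, leaving at least one admissible $j$. The ``vice versa'' part follows by interchanging the roles of $x_i$ and $y_i$.

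The argument is short, and the only real subtlety lies in the counting of the last step: the hypothesis of three good edges at $x_i$ is precisely what guarantees that, after discarding the at most one edge whose $W$-endpoint is $w'$ and the at most one edge whose colour matches $c(y_i w')$, a usable good edge survives. I expect no genuine obstacle here beyond making sure properness of the colouring is invoked to force the colours $c(x_i w_j)$ to be distinct, since without that a single colour could in principle spoil all three candidates.
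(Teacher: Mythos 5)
Your argument is correct and is essentially the paper's own proof: both select, among the three good edges at $x_i$ (whose $W$-endpoints are distinct and whose colours are pairwise distinct by properness), one that avoids the endpoint and the colour of the good edge at $y_i$, and then swap $x_iy_i$ for the resulting pair. The paper states this more tersely, but the counting you spell out in the last step is exactly the implicit content of its one-line existence claim.
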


\begin{proof}
Suppose the contrary, so $x_i$ is incident with at least three good edges and $y_iu$ is a good edge.
Since $x_i$ is incident with at least three good edges, there exists $w \in W$ such that $c(x_iw) \ne c(y_iu)$ and $u \ne w$.
Then $M \cup \{x_iw, y_iu\} \backslash \{x_iy_i\}$ is a rainbow matching of size $\delta$, a contradiction.
\end{proof}

A vertex $v \in V(M)$ is \emph{good} if $v$ is incident with at least seven good edges.
By Claim~\ref{clm:goodvertex}, we may assume without loss of generality that $\{x_1,  \dots, x_r\}$ is the set of good vertices.
Let $W' = W \cup \{ y_1, \dots, y_r\}$.

\begin{clm} \label{clm:goodedge}
No edge $uv$ in $G[W']$ has colour in $\{1, \dots, r\}$.
\end{clm}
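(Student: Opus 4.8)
The plan is to argue by contradiction: suppose some edge $uv$ in $G[W']$ has colour $j \in \{1,\dots,r\}$, and use it to build a rainbow matching of size $\delta$, contradicting the choice of $G$. The guiding idea is that $j$ is the colour of the matching edge $x_jy_j$, so we can swap $x_jy_j$ out for $uv$; this frees $x_j$, which is a good vertex and therefore has many good edges into $W$ with which to re-extend the matching.

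To make this precise I would first record which matching edges get disturbed. Let $T \subseteq \{1,\dots,r\}$ consist of $j$ together with every index $i$ for which $y_i$ is one of the endpoints $u,v$; this is well defined because $W' = W \cup \{y_1,\dots,y_r\}$, so each endpoint either lies in $W$ or equals some $y_i$ with $i \le r$. Since $\{u,v\}$ contributes at most two such indices, $|T| \le 3$, and, crucially, every $x_i$ with $i \in T$ is a good vertex. I would then delete the edges $\{x_iy_i : i \in T\}$ from $M$ and insert $uv$ in colour $j$. Because the removed edges carry exactly the colours in $T$ while $j \in T$ is restored by $uv$, the result is a rainbow matching of size $\delta - |T|$ with colour set $\{1,\dots,\delta-1\} \setminus (T\setminus\{j\})$, in which all of the vertices $x_i$ $(i \in T)$ have been liberated.

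It remains to re-extend by $|T|$ edges. Each good vertex $x_i$ $(i \in T)$ has at least seven good edges, all running to $W$ and carrying colours outside $\{1,\dots,\delta-1\}$; in particular these colours differ from $j$ and from every surviving matching colour automatically, and the $W$-endpoints avoid the surviving matching vertices. I would pick good edges $x_iw_i$ greedily, one for each $i \in T$, keeping the $w_i$ distinct, avoiding whichever of $u,v$ lie in $W$, and keeping the chosen colours distinct. Adding these $|T|$ edges produces a rainbow matching of size $\delta$, the desired contradiction.

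The only point needing verification is that this greedy choice never gets stuck, and this is precisely where the threshold seven enters. When selecting the edge at the $s$-th good vertex, the blocked $W$-endpoints number at most the $s-1$ previously used endpoints plus the at most $2$ vertices of $\{u,v\}\cap W$, and the blocked colours number at most $s-1$; by properness each blocked vertex and each blocked colour kills at most one good edge at $x_i$. Since $s \le |T| \le 3$, at most $2(s-1)+2 \le 6 < 7$ good edges are excluded, so a valid choice always survives. I expect this bookkeeping — correctly accounting for the blocked vertices and colours so the total stays below seven — to be the only real obstacle; once $T$ is set up, the augmentation itself is routine.
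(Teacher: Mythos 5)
Your proof is correct and follows essentially the same exchange argument as the paper: remove the matching edges indexed by the colour $j$ and by any $y_i$ among $\{u,v\}$, insert $uv$, and use the seven good edges at each freed good vertex $x_i$ to re-extend greedily. The paper simply spells this out as three explicit cases (according to whether $u,v$ lie in $W$ or among the $y_i$) rather than via your unified set $T$ and greedy count.
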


\begin{proof}
Suppose the contrary, so we may assume that there is an edge $uv$ in $G[W']$ such that $c(uv) =1$.
Since $G$ is properly edge-coloured, $v \ne y_1 \ne u$.
If $u,v \in W$, then there exists a vertex $w \in W$ such that $x_1w$ is good and $u \ne w \ne v$ and so $M \cup \{uv,x_1w\} \backslash \{x_1y_1\}$ is a rainbow matching of size~$\delta$, a contradiction.
Next assume that $u \in W$ and $v \notin W$, so $r \ge 2$ and $v = y_2$ say.
There exist distinct vertices $w_1,w_2 \in W \backslash \{u \}$ such that $x_1w_1$ and $x_2y_2$ are good edges with $c(x_1w_1) \ne c(x_2y_2)$.
Then, $M \cup \{y_2u, x_1w_1, x_2w_2\} \backslash \{x_1y_1, x_2y_2\}$ is a rainbow matching of size~$\delta$, a contradiction.
Finally, assume that $u,v \in W$ so $r \ge 3$ and $u = y_2$ and $v = y_3$ say.
Since $x_1$, $x_2$ and $x_3$ are good, there exist distinct vertices $w_1,w_2,w_3 \in W$  such that $x_1w_1$, $x_2w_2$ and $x_3w_3$ are good edges with distinct colours.
Thus, $M \cup \{y_2y_3, x_1w_1, x_2w_2, x_3w_3\} \backslash \{x_1y_1, x_2y_2, x_3y_3\}$ is a rainbow matching of size $\delta$, a contradiction.
\end{proof}

We say that an edge $uv$ is \emph{nice} if its colour is not in~$\{r+1, \dots, \delta-1\}$ and one of its vertices is in $W'$.
Note that every good edge is nice.
Recall that every good edge is incident with~$V(M)$.
By Claim~\ref{clm:goodvertex} and Claim~\ref{clm:goodedge}, no nice edge lies in $G[W']$.
Hence, every nice edge lies between $W'$ and $V(G) \backslash W'$.
A vertex $v \in V(M) \backslash \{ x_1,\dots,x_r,y_1, \dots,y_r\}$ is \emph{nice} if $v$ is incident with at least seven nice edges.
Note that if there is no good vertex i.e. $r=0$, the definition of good and nice vertex are the same and so there is also no nice vertex.
Next, we show the analogue of Claim~\ref{clm:goodvertex} and Claim~\ref{clm:goodedge} for nice vertices and edges.

\begin{clm} \label{clm:nicevertex}
For $r+1 \le i \le \delta-1$, if $x_i$ is incident with at least three nice edges, then no nice edge is incident with~$y_i$, and vice versa.
\end{clm}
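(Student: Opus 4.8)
The plan is to imitate the proof of Claim~\ref{clm:goodvertex}. Assuming the statement fails, fix $i$ with $r+1 \le i \le \delta-1$ such that $x_i$ is incident with at least three nice edges and $y_iu$ is a nice edge, and try to produce a rainbow matching of size $\delta$. The starting move is to delete $x_iy_i$ from $M$ and insert the two nice edges $y_iu$ and some $x_iw$. Exactly as in Claim~\ref{clm:goodvertex}, the hypothesis of three nice edges at $x_i$ lets me pick $x_iw$ with $w \ne u$ and $c(x_iw) \ne c(y_iu)$ (a nice edge at $x_i$ has at most one endpoint equal to $u$ and at most one colour equal to $c(y_iu)$, so at least one of the three survives). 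Since $i \ge r+1$, neither $x_i$ nor $y_i$ lies in $W'$, so $x_iw$ and $y_iu$ are vertex-disjoint from each other and from the spine; thus $\{x_iw,y_iu\}$ is a rainbow pair. The ``vice versa'' direction is obtained by swapping $x_i$ and $y_i$, which play symmetric roles for $i \ge r+1$.

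The difference from Claim~\ref{clm:goodvertex}, and the main obstacle, is that a nice edge need not behave like a good edge: its far endpoint lies in $W' = W \cup \{y_1,\dots,y_r\}$ rather than in $W$, and its colour may lie in $\{1,\dots,r\}$ rather than being at least $\delta$. Hence $M \cup \{x_iw,y_iu\} \setminus \{x_iy_i\}$ can fail to be a matching (if $u$ or $w$ equals some $y_k$ with $k \le r$, clashing with $x_ky_k$) or fail to be rainbow (if $c(x_iw)$ or $c(y_iu)$ equals some $k \le r$, repeating the colour of $x_ky_k$). Each such defect singles out an index $k \le r$, and I would gather these into a set $J \subseteq \{1,\dots,r\}$; writing $V_d \le 2$ for the number of endpoint-defects and $C_d \le 2$ for the number of colour-defects, one has $|J| \le V_d + C_d \le 4$.

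The repair exploits that every $x_k$ with $k \le r$ is a good vertex, hence incident with at least seven good edges, all running to distinct vertices of $W$ with distinct colours at least $\delta$. I would treat the indices of $J$ one at a time: for $k \in J$, delete $x_ky_k$ and insert a good edge $x_kw_k$ chosen to avoid the $W$-vertices and the (at least $\delta$)-colours already committed. Deleting $x_ky_k$ frees $y_k$ (curing an endpoint-defect) and releases colour $k$ (curing a colour-defect), while $c(x_kw_k) \ge \delta$ cannot clash with any surviving colour in $\{1,\dots,\delta-1\}$. The delicate step is the count, which is also what pins the threshold in the definition of a good/nice vertex to exactly seven: when $x_kw_k$ is chosen, the number of forbidden $W$-endpoints is at most $(2-V_d)+(|J|-1) \le 1+C_d$ and the number of forbidden colours is at most $(2-C_d)+(|J|-1) \le 1+V_d$, so at most $2+V_d+C_d \le 6 < 7$ of the good edges at $x_k$ are excluded and at least one always remains. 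The final edge set has size $(\delta-1)-(1+|J|)+(2+|J|)=\delta$ and is a rainbow matching, contradicting the choice of $G$.
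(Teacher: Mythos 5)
Your proof is correct and follows the same swap-and-repair strategy as the paper: delete $x_iy_i$, insert two compatible nice edges at $x_i$ and $y_i$, and cure each endpoint- or colour-collision with an index $k \le r$ by exchanging $x_ky_k$ for a suitably chosen good edge at the good vertex $x_k$, with the seven-edge threshold guaranteeing such a choice exists. Your defect-set bookkeeping is in fact more complete than the paper's own write-up, which verifies only the representative worst case ($r \ge 4$, both nice edges landing on vertices $y_j$ and carrying colours in $\{1,\dots,r\}$, four distinct indices to repair) and asserts that the remaining cases are similar.
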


\begin{proof}
Suppose the contrary, so $x_i$ is incident with at least three nice edges and $y_iu$ is a nice edge for some $r+1 \le i \le \delta-1$.
Here, we only consider one particular case as each remaining case can be verified using a similar argument.
Suppose that $r \ge 4$, $u=y_1$ and $c(y_iy_1) = 2$.
Since $x_i$ is nice, there exists a vertex $v \in W'$ such that $x_iv$ is nice and $v \ne u=y_1$ and $c(x_iv) \ne c(y_iy_1) = 2$.
Assume that $v = y_3$ and $c(x_iy_3 )=4$ .
Recall that $x_1, \dots, x_4$ are good vertices, so each is joined to at least seven good edges.
Thus, there exist distinct vertices $w_1,w_2,w_3,w_4 \in W$ such that $x_1w_1$, $x_2w_2$, $x_3w_3$ and $x_4w_4$ are good edges with distinct colours.
Therefore, $M \cup \{y_iy_1, x_iy_3 ,x_1w_1, x_2w_2, x_3w_3,x_4w_4\} \backslash \{x_1y_1, x_2y_2, x_3y_3,x_4y_4,x_iy_i\}$ is a rainbow matching of size $\delta$, a contradiction.
\end{proof}

By Claim~\ref{clm:nicevertex}, we may assume that $\{x_{r+1}, x_{r+2}, \dots, x_{r+s}\}$ is the set of nice vertices.

\begin{clm} \label{clm:niceedge}
No edge $uv$ in $G[W']$ has colour in $\{1, \dots, r+s\}$.
\end{clm}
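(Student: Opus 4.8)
The plan is to imitate the proof of Claim~\ref{clm:goodedge}, now exploiting the niceness of the vertices $x_{r+1},\dots,x_{r+s}$ in place of the goodness of $x_1,\dots,x_r$, while invoking Claim~\ref{clm:goodedge} itself to discharge the colours $\{1,\dots,r\}$. So suppose for contradiction that some edge $uv$ of $G[W']$ has colour in $\{1,\dots,r+s\}$. By Claim~\ref{clm:goodedge} its colour must lie in $\{r+1,\dots,r+s\}$, and after relabelling I may assume $c(uv)=r+1$, so that $x_{r+1}$ is a nice vertex incident with at least seven nice edges to $W'$. Since $c(uv)=r+1=c(x_{r+1}y_{r+1})$ and $y_{r+1}\notin W'$, the endpoints $u,v$ automatically differ from $y_{r+1}$.

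The first step is to free the colour $r+1$ by deleting $x_{r+1}y_{r+1}$ from $M$ and inserting $uv$, which also carries colour $r+1$; I then re-cover $x_{r+1}$ by a nice edge $x_{r+1}z$ with $z\in W'$. The new complication, absent in Claim~\ref{clm:goodedge}, is that the endpoints $u,v$ and the neighbour $z$ may themselves lie among $y_1,\dots,y_r\subseteq W'$. Each such occurrence, say $y_j$, uncovers the good vertex $x_j$, which I re-cover by a good edge $x_jw_j$ with $w_j\in W$, exactly as in the cascade of Claim~\ref{clm:nicevertex}. Carrying this out, I delete $x_{r+1}y_{r+1}$ together with the at most three edges $x_iy_i$ whose partners $y_i$ are consumed, and I add $uv$ (colour $r+1$), the nice edge $x_{r+1}z$, and one good edge at each uncovered good vertex; counting shows that the resulting set has exactly $\delta$ edges.

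It remains to check that these additions can be made simultaneously into a rainbow matching, and this is where the constant seven does its work. When choosing $z$ I must avoid the at most a few already-used vertices and the at most a few colours I intend to retain; since $x_{r+1}$ has at least seven nice edges, a valid $z$ exists. Likewise each uncovered good vertex $x_j$ offers at least seven good edges to $W$, from which I select $w_j$ distinct from all previously chosen $W$-vertices and of a colour distinct from the others; as the good and nice colours avoid $\{1,\dots,\delta-1\}$ except possibly on the freed indices, no clash with the retained edges of $M$ can occur. As in Claim~\ref{clm:nicevertex}, it suffices to verify one representative configuration, the remaining cases being entirely analogous, and the resulting rainbow matching of size $\delta$ contradicts the choice of $G$. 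I expect the bookkeeping of the cascade---ensuring that the deleted indices, the chosen $W$-vertices, and the chosen colours are mutually distinct---to be the only genuine obstacle, and the slack provided by the threshold seven is precisely what guarantees every such choice is available.
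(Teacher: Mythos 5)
Your overall strategy matches the paper's: dispose of colours $\{1,\dots,r\}$ via Claim~\ref{clm:goodedge}, assume $c(uv)=r+1$, swap out $x_{r+1}y_{r+1}$ for $uv$ plus a nice edge at $x_{r+1}$, and run a cascade that re-covers any good vertex $x_j$ whose partner $y_j$ is consumed. But there is a genuine gap in how you choose the nice edge $x_{r+1}z$. A nice edge is only guaranteed to have colour outside $\{r+1,\dots,\delta-1\}$; its colour may well lie in $\{1,\dots,r\}$, i.e.\ coincide with the colour of a \emph{retained} matching edge $x_jy_j$ with $j\le r$. You assert that, since $x_{r+1}$ has at least seven nice edges, you can pick $z$ avoiding ``the at most a few colours I intend to retain'' --- but the colours to be avoided here are all of $\{1,\dots,r\}$ minus the at most three freed indices, and $r$ is not bounded by any constant at this point in the argument. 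If, say, $r\ge 10$, then all seven nice edges at $x_{r+1}$ could have colours in $\{1,\dots,r\}$ clashing with retained edges of $M$, and no valid $z$ exists under your requirements.

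The paper resolves this by letting the colour clash happen and extending the cascade one more step: in its representative case the chosen nice edge $x_{r+1}y_3$ has colour $4$, so $x_4y_4$ is also deleted and $x_4$ is re-covered by a good edge $x_4w_4$. Good edges genuinely avoid all of $\{1,\dots,\delta-1\}$, so they terminate the cascade. This is why the paper's exchange uses four good edges rather than your three: up to three good vertices are uncovered by vertex consumption ($u$, $v$, $z$ landing on $y_1,y_2,y_3$) and one more by the colour of the chosen nice edge. The edge count still comes out to $\delta$ once this extra deletion/insertion pair is included, and the threshold of seven comfortably supports the resulting choices, so the fix is local --- but as written, the step ``a valid $z$ exists'' is false and your proof does not go through.
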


\begin{proof}[Proof of claim]
By Claim~\ref{clm:goodedge}, the claim holds if $s =0$.
Assume that $s \ge 1$, so $r \ge 1$.
Suppose that there is an edge $uv$ in $G[W']$ with $c(uw) =r+1$ say.
Again, we only consider one particular case as each remaining case can be verified using a similar argument.
Suppose that $r \ge 4$, $u=y_1$ and $v = y_2$.
Since $x_{r+1}$ is nice, there exists a vertex $v' \in W'$ such that $v'x_{r+1}$ is a nice edge.
Assume that $v = y_3$ and $c(x_{r+1}y_3 )=4$.
Recall that $x_1, \dots, x_4$ are good vertices, so each is joined to at least seven good edges.
Thus, there exist distinct vertices $w_1,w_2,w_3,w_4 \in W$ such that $x_1w_1$, $x_2w_2$, $x_3w_3$ and $x_4w_4$ are good edges with distinct colours.
Therefore, $M \cup \{y_1y_2, x_{r+1}y_3 ,x_1w_1, x_2w_2, x_3w_3,x_4w_4\} \backslash \{x_1y_1, x_2y_2, x_3y_3,x_4y_4,x_{r+1}y_{r+1}\}$ is a rainbow matching of size $\delta$, a contradiction.
\end{proof}

Next, we counts the number of nice edges in $G'$.

\begin{clm} \label{clm:maxniceedges}
There are at most $(3\delta -9+s)r + 6(\delta - 1) $ nice edges.
\end{clm}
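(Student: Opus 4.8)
The plan is to count nice edges by the endpoint that lies outside $W'$. Recall that every nice edge runs between $W'$ and $V(G)\setminus W'$, so each nice edge has a unique endpoint in $V(G)\setminus W' = \{x_1,\dots,x_{\delta-1}\}\cup\{y_{r+1},\dots,y_{\delta-1}\}$. Writing $d^{*}(v)$ for the number of nice edges incident with $v$, the total number of nice edges is therefore $\sum d^{*}(v)$, the sum ranging over $v\in V(G)\setminus W'$. I would split this sum according to the type of $v$: the good vertices $x_1,\dots,x_r$; the nice vertices $x_{r+1},\dots,x_{r+s}$; the partners $y_{r+1},\dots,y_{r+s}$ of the nice vertices; and the ordinary pairs $x_i,y_i$ with $r+s<i\le\delta-1$ (for which $y_i$ also lies in $V(G)\setminus W'$).

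Three of the four types are straightforward. For a good vertex I use only the crude bound $d^{*}(x_i)\le\Delta(G)\le 3(\delta-1)$ from Claim~\ref{clm:Delta}. For an ordinary pair, neither $x_i$ nor $y_i$ is a nice vertex, so each meets at most six nice edges; moreover Claim~\ref{clm:nicevertex} says at most one of $x_i,y_i$ can be incident with three or more nice edges, and in that event the other is incident with none, whence $d^{*}(x_i)+d^{*}(y_i)\le 6$. For the partner $y_i$ of a nice vertex $x_i$ (which meets at least seven, hence at least three, nice edges), Claim~\ref{clm:nicevertex} forces $d^{*}(y_i)=0$.

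The delicate point is the contribution of a nice vertex $x_i$, for which the bound $3(\delta-1)$ is far too weak; here I claim $d^{*}(x_i)\le r+6$. I would split the nice edges at $x_i$ by their other endpoint. At most $r$ of them land in $\{y_1,\dots,y_r\}$, simply because there are only $r$ such vertices. For the nice edges from $x_i$ into $W$, I would argue that none can have a colour in $\{1,\dots,r\}$: such an edge, combined with good edges at the good vertices $x_1,\dots,x_r$, would permit an augmentation of $M$ to a rainbow matching of size $\delta$ exactly as in the proof of Claim~\ref{clm:niceedge}. Hence every nice edge from $x_i$ into $W$ has a colour outside $\{1,\dots,\delta-1\}$, i.e.\ is a \emph{good} edge; since $x_i$ is not a good vertex it meets at most six good edges, so at most six nice edges reach $W$. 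This yields $d^{*}(x_i)\le r+6$, and making this augmentation argument precise is the main obstacle. Substituting the four bounds and summing gives $3(\delta-1)\,r + (r+6)\,s + 6\,(\delta-1-r-s) = (3\delta-9+s)r + 6(\delta-1)$, as required.
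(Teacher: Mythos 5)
Your decomposition is exactly the paper's: sum the nice-edge degrees over the endpoints in $V(G)\setminus W'$, bound each good vertex by $\Delta(G)\le 3(\delta-1)$ (Claim~\ref{clm:Delta}), bound each ordinary pair $x_i,y_i$ by $6$ via Claim~\ref{clm:nicevertex}, and observe that the partner of a nice vertex meets no nice edge. Those three bounds and the closing arithmetic are precisely what the paper writes. The one place where you supply more detail than the paper --- the bound $d^{*}(x_i)\le r+6$ at a nice vertex, which the paper asserts with only a reference to Claim~\ref{clm:nicevertex} and the definitions --- is where your argument has a genuine gap.

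The problem is the sub-claim that no nice edge from a nice vertex $x_i$ into $W$ has colour in $\{1,\dots,r\}$. This does not follow ``exactly as in the proof of Claim~\ref{clm:niceedge}''. There the offending edge lies inside $G[W']$, so it occupies no vertex of $M$; deleting the relevant pairs $x_jy_j$ and inserting the offending edge together with one good edge at each freed $x_j$ produces a \emph{larger} matching. Here the offending edge $x_iw$, say with $c(x_iw)=1$, is anchored at $x_i\in V(M)$, so you must delete both $x_1y_1$ (colour clash) and $x_iy_i$ (vertex clash); inserting $x_iw$ and a good edge $x_1w_1$ only returns you to size $\delta-1$, and the lost colour $i$ cannot be recovered: $y_i$ meets no nice edge (Claim~\ref{clm:nicevertex}, since $x_i$ is nice), $y_1$ meets no good edge (Claim~\ref{clm:goodvertex}), and $G[W]$ carries no edge of colour $i\le r+s$ (Claim~\ref{clm:niceedge}). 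The exchange is size-neutral, not augmenting, so no contradiction is reached. Without this sub-claim your split yields only $d^{*}(x_i)\le r+6+r=2r+6$ (properness of the colouring gives at most $r$ edges of colour in $\{1,\dots,r\}$ at $x_i$), which is weaker than the claim requires. You would need a different justification for the $r+6$ bound here --- and, it must be said, the paper does not spell one out either.
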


\begin{proof}
Recall that $V \backslash W' = \{x_1, \dots, x_{\delta-1}, y_{r+1}, \dots, y_{\delta-1} \}$ and every nice edge lies between $W'$ and $V \backslash W'$.
For $1 \le i \le r$, $x_i$ is joined to at most $3(\delta-1)$ nice edges as $\Delta(G) \le 3(\delta-1)$.
By Claim~\ref{clm:nicevertex} and the definition of nice, for $r+1 \le i \le r+s$ there are at most $r+6$ nice edges joining to $x_i$ and none to $y_i$.
For $r+s+1 \le i \le \delta -1$, there are at most six nice edges joining to $x_i$ or $y_i$ by Claim~\ref{clm:nicevertex}.
Therefore, the number of nice edges is at most
\begin{align}
3(\delta -1)r + (r+6)s + 6(\delta - 1-r-s) = (3\delta -9+s)r + 6(\delta - 1). \nonumber
\end{align}
\end{proof}

Recall that $V(M)$ is incident with $a$ edges of colour~$\delta$.
Hence, there are at least $2(a - \delta+1)$ vertices $v \in V(M)$ such that $c(vw) = \delta$ for some $w \in W$.
Let $t$ be the number of integers $i$ such that $r+s+1 \le i \le \delta-1$, and $c(x_iw) = \delta$ or $c(y_iw) = \delta$ for some $w \in W$.
Without loss of generality, we may assume that $r+s+1, \dots, r+s+t$ are such~$i$.
By Claim~\ref{clm:goodvertex} and Claim~\ref{clm:nicevertex}, we have 
\begin{align}
\textrm{$t \ge a - \delta+1 - \frac{r+s}{2}$ and $r+s+t \le \delta -1$.} \label{eqn:t}
\end{align}

\begin{clm} \label{clm:t}
For $r+s+1\le i \le r+s+t$, there are at most one edge of colours~$i$ in $G[W]$.
\end{clm}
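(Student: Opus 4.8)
The plan is to argue by contradiction: suppose that for some $i$ with $r+s+1 \le i \le r+s+t$ there are two edges of colour~$i$ in $G[W]$, and to build from them a rainbow matching of size~$\delta$, contradicting the choice of $G$ as a counterexample. The whole argument is a single edge swap, so I expect it to be short; the only thing to be careful about is a coincidence of vertices, which is precisely what forces the hypothesis of \emph{two} edges rather than one.

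First I would record the two structural facts that drive the swap. Since $G$ is properly edge-coloured, two edges of the same colour cannot share a vertex, so the two colour-$i$ edges in $G[W]$, say $e_1 = u_1v_1$ and $e_2 = u_2v_2$, are vertex-disjoint, and all four endpoints lie in $W$, hence outside $V(M)$. Second, because $i$ lies in the range $r+s+1 \le i \le r+s+t$, by the choice of these indices there is a vertex $w^* \in W$ with $c(x_iw^*) = \delta$ or $c(y_iw^*) = \delta$; using the symmetry between $x_i$ and $y_i$ I may assume $c(x_iw^*) = \delta$.

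The swap is then immediate. Deleting $x_iy_i$ from $M$ removes colour~$i$ and frees the vertices $x_i$ and $y_i$. Since $e_1$ and $e_2$ are disjoint, $w^*$ is an endpoint of at most one of them, so at least one, say $e_1$, avoids $w^*$; I add $e_1$ to recover colour~$i$ using only vertices of $W$, and add $x_iw^*$ to introduce the genuinely new colour~$\delta$. Thus $M \cup \{e_1, x_iw^*\} \backslash \{x_iy_i\}$ has size~$\delta$, its colours are exactly $\{1,\dots,\delta\}$ and hence distinct, and disjointness is direct: the endpoints of $e_1$ lie in $W$ and differ from $w^*$, while $x_i \in V(M)$ and $w^* \in W$ meet neither the remaining edges of $M$ nor $e_1$. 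This rainbow matching of size~$\delta$ is the desired contradiction.

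The single point requiring care is the case $w^* \in \{u_1,v_1\}$, where $x_iw^*$ would clash with the chosen colour-$i$ edge; this is exactly where the assumption of two colour-$i$ edges is used, since their disjointness guarantees that one of them is disjoint from $w^*$ and so the swap always goes through. I do not anticipate any obstacle beyond this bookkeeping.
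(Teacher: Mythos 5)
Your proposal is correct and follows essentially the same route as the paper: both arguments use the disjointness of the two colour-$i$ edges to select one avoiding the vertex $w$ with $c(x_iw)=\delta$, and then perform the swap $M \cup \{uv, x_iw\} \backslash \{x_iy_i\}$. The extra care you take over the coincidence $w^* \in \{u_1,v_1\}$ is exactly the point the paper handles with its ``$u \ne w \ne v$'' condition.
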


\begin{proof}[Proof of claim]
Suppose $uv$ and $u'v'$ are edges of colour $i$ in $G[W]$ for some $r+s+1\le i \le r+s+t$.
Without loss of generality, we may assume that there exists $w \in W$ such that $c(x_{i}w) = \delta$ and $u \ne w \ne v$.
Hence, $M \cup \{uv, x_{i}w\} \backslash \{x_{i}y_{i}\}$ is a rainbow matching of size $\delta$, a contradiction.
\end{proof}

Now, we count the number of nice edges from $W'$ to $V \backslash W'$.
Recall that there are at most $a$ edges of the same colour.
By Claim~\ref{clm:niceedge}, there is no edge in $G[W']$ of colour~$r+1 \le i \le r+s$.
Thus, for $r+1 \le i \le r+s$ there are at most $a-1$ vertices in $W'$ that are incident with an edge of colour~$i$.
By Claim~\ref{clm:t}, for $r+s+1 \le i \le r+s+t$, there are at most $a$ vertices are $W$ that is incident with an edge of colour~$i$.
Recall that $W' \backslash W =\{ y_1, \dots, y_r\}$.
For $r+s+1 \le i \le r+s+t$, there are at most $a+r$ vertices in $W'$ that are incident with an edge of colour~$i$.
For $r+s+t+1 \le i \le \delta-1$, there are at most $2(a-1)$ vertices in $W'$ that is incident with an edge of colour~$i$.
Thus, the number of nice edges from $W'$ to $V \backslash W'$ is at least
\begin{align}
	& \delta|W'| - (a-1)s-(a+r)t -2(a-1)(\delta -1 - r-s-t)	 \nonumber\\
	= & \delta n - 2\delta(\delta-1) -  (a-1)(2\delta-2-2r-s)+(a-2)t +(\delta - t)r \nonumber \\
	\ge & \delta n - 2\delta(\delta-1) -  (a-1)(2\delta-2-2r-s)+(a-2)t +(r+s+1)r, \nonumber
\end{align}
where we recall that $|W'| = |W|+r = n - 2(\delta-1)+r$ and~\eqref{eqn:t}.
Since there are at most $(3\delta -9+s)r + 6(\delta - 1)$ nice edges in $G$ by Claim~\ref{clm:maxniceedges}, 
\begin{align}
	\delta n \le& (3\delta -10-r)r - (a-2)t + 2(\delta +3)(\delta - 1)  + (a-1)(2\delta-2-2r-s). \label{eqn:|G|}
\end{align}
For the remaining of the proof, we bound the right hand side of the above inequality from above to obtain a contradiction. 
Note that the coefficient of $t$ is $-(a-2) \le 0$ by Claim~\ref{clm:a}, so we can take the minimum value of $t$.
By~\eqref{eqn:t}, $t \ge a- \delta +1 -(r+s)/2$.

If $a \le \delta -1 +(r+s)/2$, then we take $t=0$.
The coefficient of $a$ becomes $2 \delta - 2- 2r -s \ge 2(\delta -1 - r-s) \ge 0$.
Thus, by taking $a = \delta -1 +(r+s)/2$, \eqref{eqn:|G|} becomes
\begin{align}
	\delta n \le 2(2 \delta+1) (\delta -1) + (2 \delta -7 -2r)r- (3r+s-2)s/2.\nonumber
\end{align}
Recall that if $s \ge 1$, then $r \ge 1$.
Hence, $(3r+s-2)s \ge 0$ and so 
\begin{align}
	\delta n \le & 2(2 \delta+1) (\delta -1) + (2 \delta -7 -2r)r
	\le   9\delta^2/2 -11\delta/2 +33/8 \nonumber
\end{align}
by taking $r= \delta/2-7/4$.
Hence, $n\le 9\delta/2 -11/2 +33/8\delta$, a contradiction.

If $a \ge \delta -1 +(r+s)/2$, we take $t = a- \delta +1 -(r+s)/2\ge 0$.
Then, \eqref{eqn:|G|} becomes
\begin{align}
	\delta |G| \le & (3\delta -1  - (3r+s)/2-a )a +(3 \delta -9 -r)r +2 (\delta+1)(\delta -1)
\end{align}
If $(3\delta -1)/2  - (3r+s)/4 \le \delta -1 +(r+s)/2$ , then right hand side is maximum when $a = \delta -1 +(r+s)/2$, which corresponds to the case when $a \le \delta -1 +(r+s)/2$ and so we are done.
Hence, we may assume that $(3\delta -1)/2  - (3r+s)/4 > \delta -1 +(r+s)/2$, so 
\begin{align}
	{5r+3s} < 2(\delta +1). \label{eqn:r}
\end{align}
Now we take $a = (3\delta -1)/2  - (3r+s)/4$, so 
\begin{align*}	
	\delta n \le & (3\delta -1 - (3r+s)/2)^2/4 +(3 \delta -7 +s)r +2 \delta(\delta -1)  \\
	= & \left( -\frac{7r}{16}+\frac{3\delta}4  - \frac{33}4\right)r + \left(\frac{3r}{8} +\frac{s}{16} -\frac{3\delta}4 + \frac14 \right)s + \frac{17 \delta^2}4 - \frac{3\delta}2 -\frac74	\\
	\le & \left( -\frac{7r}{16}+\frac{3\delta}4  - \frac{33}4\right)r + \left(\frac{3(5r+3s)}{40} -\frac{3\delta}4 + \frac14 \right)s + \frac{17 \delta^2}4 - \frac{3\delta}2 -\frac74	\\
	\le &  \left( -\frac{7r}{16}+\frac{3\delta}4  - \frac{33}4\right)r + \frac{(2-3\delta)s}{5} + \frac{17 \delta^2}4 - \frac{3\delta}2 -\frac74	\\
	\le & \left( -\frac{7r}{16}+\frac{3\delta}4  - \frac{33}4\right)r + \frac{17 \delta^2}4 - \frac{3\delta}2 -\frac74.
\end{align*}
Note that there is a maximal point at $r = 6(d-11)/7$.
Recall \eqref{eqn:r} that $r \le 2(\delta +2)/5$.
Therefore, 
\begin{align*}
n \le 
\begin{cases}
(17 \delta- 6)/4 - 7/(4\delta) & \textrm{if $\delta \le 11$}\\
(32 \delta - 60)/7 +260/(7\delta) & \textrm{if $12 \le \delta \le 22$}\\
112(\delta-1)/25 -863/(100\delta) & \textrm{if $\delta \ge 23$}
\end{cases}
\end{align*}
by taking $r=0$, $r = 6(d-11)/7$ and $r=2(\delta +2)/5$ respectively.
Moreover, $n< (9\delta - 5)/2$ a contradiction.
This completes the proof of Theorem~\ref{thm:main}.

\providecommand{\bysame}{\leavevmode\hbox to3em{\hrulefill}\thinspace}
\providecommand{\MR}{\relax\ifhmode\unskip\space\fi MR }
\providecommand{\MRhref}[2]{%
  \href{http://www.ams.org/mathscinet-getitem?mr=#1}{#2}
}
\providecommand{\href}[2]{#2}

\end{document}